\numberwithin{equation}{section}
\def\a{\alpha}
\newcommand{\C}{\mathbb{C}}
\newcommand{\Z}{\mathbb{Z}}
\newtheorem{theorem}{Theorem}[section]  %%with section numbering
\newtheorem{proposition}[theorem]{Proposition}
\newtheorem{corollary}[theorem]{Corollary}
\newtheorem{Thm}{Theorem}[]		%%with numbering
\newtheorem*{thm}{Theorem}	%%without numbering
\theoremstyle{definition}
\newtheorem{example}{Example}[section]
\newtheorem*{exs}{Examples}
\theoremstyle{remark}
\newtheorem*{rmk}{Remark}
\newtheorem{ind}[]{{\rm\it Indice}}
\title{CM Evaluations of the Goswami-Sun series}
\author[Dawsey]{Madeline Locus Dawsey*}
\address{Department of Mathematics and Computer Science,
Emory University, Atlanta, GA 30322}
\email{madeline.locus@emory.edu}
\author[Ono]{Ken Ono}
\address{Department of Mathematics and Computer Science, Emory University, Atlanta, GA 30322}
\email{ono@mathcs.emory.edu}
\begin{document}

\thanks{*This author was previously known as Madeline Locus.}
\subjclass[2010]{11B99, 11Mxx, 11F11}
\keywords{$q$-analogue, Riemann-zeta values, Gamma-values}

\begin{abstract}
In recent work, Sun constructed two $q$-series, and he showed that their limits as $q\rightarrow1$ give new derivations of the Riemann-zeta values $\zeta(2)=\pi^2/6$ and $\zeta(4)=\pi^4/90$.  Goswami extended these series to an infinite family of $q$-series, which he analogously used to obtain new derivations of the evaluations of $\zeta(2k)\in\mathbb{Q}\cdot\pi^{2k}$ for every positive integer $k$.  Since it is well known that $\Gamma\left(\frac{1}{2}\right)=\sqrt{\pi}$, it is natural to seek further specializations of these series which involve special values of the $\Gamma$-function.  Thanks to the theory of complex multiplication, we show that the values of these series at all CM points $\tau$, where $q:=e^{2\pi i\tau}$, are algebraic multiples of specific ratios of $\Gamma$-values.  In particular, classical formulas of Ramanujan allow us to explicitly evaluate these series as algebraic multiples of powers of  $\Gamma\left(\frac{1}{4}\right)^4/\pi^3$ when $q=e^{-\pi}$, $e^{-2\pi}$.
\end{abstract}

\maketitle

\section{Introduction and Statement of Results}\label{1}

Recently, Sun \cite{S} obtained two $q$-series identities which allowed him to prove that
\begin{equation}\label{sun1}
\lim_{\substack{q\rightarrow1\\|q|<1}}(1-q)^2\sum_{n=0}^\infty\frac{q^n(1+q^{2n+1})}{(1-q^{2n+1})^2}=\frac{3}{2}\zeta(2)=\frac{\pi^2}{4}
\end{equation}
and
\begin{equation}\label{sun2}
\lim_{\substack{q\rightarrow1\\|q|<1}}(1-q)^4\sum_{n=0}^\infty\frac{q^{2n}(1+4q^{2n+1}+q^{4n+2})}{(1-q^{2n+1})^4}=\frac{45}{8}\zeta(4)=\frac{\pi^4}{16}.
\end{equation}
Sun's formulas lead to the natural question: Are these $q$-series a glimpse of an infinite family that offers new derivations for the evaluations of $\zeta(2k)$ for all positive integers $k$?  Goswami elegantly answered this problem in \cite{G}; he defined a natural family of identities whose limits as $q\rightarrow1$ with $|q|<1$ give Euler's formula for the Riemann-zeta values at all even integers.

These results have been described as $q$-analogues of Euler's identities for $\zeta(2k)$. Here we offer further support of this view. Namely, to be a strong $q$-analogue, one hopes for further specializations of $q$ which are expressions in related special functions.  We address this question by observing that $\zeta(2k)\in\mathbb{Q}\cdot\pi^{2k}=\mathbb{Q}\cdot\Gamma\left(\frac{1}{2}\right)^{4k}$, and we ask if Goswami's series have evaluations involving algebraic multiples of naturally corresponding $\Gamma$-values.  We show that this is indeed the case, thanks to the theory of complex multiplication and modular forms.

In order to state our results, we first recall the $q$-series that Goswami assembled which extended Sun's original identities into an infinite family.  Throughout, $k$ is a positive integer.  If we denote the Stirling numbers of the second kind by $\left\{\begin{smallmatrix}n \\ k\end{smallmatrix}\right\}$, then we define $a_k(m)$ and $b_k(\ell)$ by
\begin{eqnarray*}
a_k(m)&:=&\sum_{j=0}^{2k-1}j!(-1)^j\left\{\begin{smallmatrix}2k-1 \\ j\end{smallmatrix}\right\}\left(\begin{smallmatrix}j \\ m\end{smallmatrix}\right),\\
\hspace{.5cm}b_k(\ell)&:=&\sum_{m=0}^{2k-1}(-1)^ma_k(m)\left(\begin{smallmatrix}2k-m-1 \\ \ell\end{smallmatrix}\right)\in\mathbb{Z}.
\end{eqnarray*}
Using these quantities, we define the degree $2k-2$ polynomial
\begin{equation*}
P_{2k-2}^e(z):=\sum_{\ell=1}^{2k-1}(-1)^\ell b_k(\ell)z^{\ell-1},
\end{equation*}
and the degree $4k-2$ polynomial
\begin{equation*}
P_{4k-2}^o(z):=(1+z)^{2k}P_{2k-2}^e(z)-2^{2k-1}zP_{2k-2}^e\left(z^2\right).
\end{equation*}
For notational convenience, we define Goswami's $q$-series as follows.
\begin{equation}
\label{G}\mathcal{G}_{2k}(q):=\left\{\begin{array}{ll}\sum\limits_{n=0}^\infty\dfrac{q^{2n+1}P_{4k-2}^o\left(q^{2n+1}\right)}{\left(1-q^{4n+2}\right)^{2k}},&\mbox{if }k\text{ is odd.}\vspace{.5cm}\\
2^{2k-1}\sum\limits_{n=0}^\infty\dfrac{q^{4n+2}P_{2k-2}^e\left(q^{4n+2}\right)}{\left(1-q^{4n+2}\right)^{2k}},&\mbox{if }k\text{ is even,}\end{array}\right.
\end{equation}
\begin{rmk}
When $k=1$ and $k=2$, these are essentially Sun's $q$-series.  A critical feature of the results obtained here is that the $\mathcal{G}_{2k}(q)$ are holomorphic modular forms on $\Gamma_0(4)$ of integer weight $2k$.
\end{rmk}
As usual, we let $\overline{\mathbb{Q}}$ denote the algebraic closure of the field of rational numbers.  Suppose that $D<0$ is the fundamental discriminant of the imaginary quadratic field $\mathbb{Q}(\sqrt{D})$.  Let $h(D)$ denote the class number of $\mathbb{Q}(\sqrt{D})$, and define $h'(D):=1/3$ (resp. $1/2$) when $D=-3$ (resp. $-4$), and $h'(D):=h(D)$ when $D<-4$.  We then let
\begin{equation}\label{omega}
\omega_D:=\frac{1}{\sqrt{\pi}}\left(\prod_{j=1}^{|D|-1}\Gamma\left(\frac{j}{|D|}\right)^{\chi_D(j)}\right)^{\frac{1}{2h'(D)}},
\end{equation}
where $\chi_D(\bullet):=\left(\frac{D}{\bullet}\right)$.  In terms of this notation, we obtain the following theorem.

\begin{theorem}\label{main}
If $D<0$ is a fundamental discriminant and $\tau\in\mathbb{H}\cap\mathbb{Q}(\sqrt{D})$, then
\begin{equation*}
\mathcal{G}_{2k}\left(e^{2\pi i\tau}\right)\in\overline{\mathbb{Q}}\cdot\omega_D^{2k}.
\end{equation*}
\end{theorem}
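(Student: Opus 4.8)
The plan is to deduce the theorem from one algebraic input and one analytic input. The algebraic input, supplied by the Remark, is that $\mathcal{G}_{2k}$ is a holomorphic modular form of weight $2k$ on $\Gamma_0(4)$; what I must add is that its Fourier coefficients are algebraic, in fact rational. This is immediate from the definition: since each $b_k(\ell)\in\mathbb{Z}$, both $P^e_{2k-2}$ and $P^o_{4k-2}$ have integer coefficients, so expanding each summand $q^{2n+1}P^o_{4k-2}(q^{2n+1})/(1-q^{4n+2})^{2k}$ (respectively the even-$k$ summand) as a geometric-type power series and collecting contributions yields a $q$-expansion in $\mathbb{Z}[[q]]$. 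Hence $\mathcal{G}_{2k}\in M_{2k}(\Gamma_0(4))$ with $\overline{\mathbb{Q}}$-rational coefficients. The theorem will then follow from the general principle that such a form, evaluated at a CM point, is an algebraic multiple of a single transcendental period attached to the field, together with the Chowla-Selberg formula identifying that period with $\omega_D$.

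Next I would make precise that all weight-$k$ forms with algebraic Fourier coefficients take values at a fixed CM point in one line $\overline{\mathbb{Q}}\cdot\Pi_\tau^{\,k}$. Write $K=\mathbb{Q}(\sqrt{D})$ and let $\tau\in\mathbb{H}\cap K$; then $E_\tau=\mathbb{C}/(\mathbb{Z}+\mathbb{Z}\tau)$ has complex multiplication by an order in $K$, so $j(\tau)\in\overline{\mathbb{Q}}$ and $E_\tau$ admits a model over $\overline{\mathbb{Q}}$. For any weight-$k$ form $f$ with algebraic coefficients, the quotient $f^{12}/\Delta^{k}$ is a weight-$0$ modular function on $\Gamma_0(4)$ with $q$-expansion in $\overline{\mathbb{Q}}((q))$ (using that $\Delta=q\prod(1-q^n)^{24}$ is nonvanishing on $\mathbb{H}$ with leading coefficient $1$). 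Since CM points are algebraic points of the modular curve $X_0(4)$, which is defined over $\mathbb{Q}$, and modular functions with algebraic $q$-expansions lie in its $\overline{\mathbb{Q}}$-function field, such a quotient takes an algebraic value at $\tau$. Therefore $f(\tau)\in\overline{\mathbb{Q}}\cdot\Pi_\tau^{\,k}$ where $\Pi_\tau:=\Delta(\tau)^{1/12}$ depends only on $\tau$, not on $f$ or $k$. Applying this to $f=\mathcal{G}_{2k}$ gives $\mathcal{G}_{2k}(e^{2\pi i\tau})\in\overline{\mathbb{Q}}\cdot\Pi_\tau^{\,2k}$.

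It then remains to show $\Pi_\tau\in\overline{\mathbb{Q}}\cdot\omega_D$. First, $\tau$ need not have fundamental discriminant, but $E_\tau$ is isogenous over $\overline{\mathbb{Q}}$ to $\mathbb{C}/\mathcal{O}_K$, and isogenous curves have periods differing by an algebraic factor; hence $\Pi_\tau$ is an algebraic multiple of a single period $\Omega_K$ depending only on $K$, uniformly in $\tau\in\mathbb{H}\cap K$. Second, I would invoke the Chowla-Selberg formula, which evaluates $\Delta=\eta^{24}$ at a CM point of discriminant $D$ and shows
\[
\Omega_K \;\sim_{\overline{\mathbb{Q}}}\; \frac{1}{\sqrt{\pi}}\left(\prod_{j=1}^{|D|-1}\Gamma\!\left(\frac{j}{|D|}\right)^{\chi_D(j)}\right)^{\frac{w_D}{4h(D)}}.
\]
Because $1/(2h'(D))=w_D/(4h(D))$ by the definitions of $h'(D)$ and $w_D$ (namely $w_{-3}=6$, $w_{-4}=4$, and $w_D=2$ for $D<-4$), the right-hand side is exactly $\omega_D$ up to an algebraic factor (a power of $\sqrt{|D|}$). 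Thus $\Pi_\tau\in\overline{\mathbb{Q}}\cdot\omega_D$, and combining with the previous paragraph gives $\mathcal{G}_{2k}(e^{2\pi i\tau})\in\overline{\mathbb{Q}}\cdot\omega_D^{2k}$.

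The hard part will be the bookkeeping of the transcendental normalization. One must track the powers of $\pi$ that enter when passing from the analytic Eisenstein normalization $g_2=\tfrac{4}{3}\pi^4 E_4$ to the algebraic Weierstrass model, and verify that they are precisely absorbed by the factor $\pi^{-1/2}$ in the definition of $\omega_D$, so that the final proportionality constant is genuinely algebraic. The lemniscatic case $D=-4$, where $\omega_{-4}\sim_{\overline{\mathbb{Q}}}\Gamma(1/4)^2/\pi^{3/2}$ while the period of $y^2=4x^3-4x$ is $\sim_{\overline{\mathbb{Q}}}\Gamma(1/4)^2/\sqrt{\pi}$, furnishes the cleanest consistency check. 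Everything else—modularity, holomorphy, and integrality of coefficients—is either given or a direct expansion, so the CM-period identification via Chowla-Selberg is the conceptual heart of the argument.
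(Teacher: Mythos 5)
Your proposal is correct, and its skeleton coincides with the paper's: both arguments rest on the principle that a weight-$k$ modular form with algebraic Fourier coefficients takes values in $\overline{\mathbb{Q}}\cdot\Omega_D^{k}$ at every CM point of $\mathbb{Q}(\sqrt{D})$, followed by a comparison of that period with $\omega_D$. The difference lies in what is cited versus proved. The paper quotes this principle wholesale as Theorem \ref{Omega} (p.~84 of \cite{123}), whose $\Omega_D$ already carries the Chowla--Selberg product of $\Gamma$-values, so its proof of Theorem \ref{main} reduces to exhibiting $\mathcal{G}_{2k}$ via the eta-quotient expression (\ref{eta-quot}) (which certifies modularity with rational coefficients, and is in any case needed later for Corollary \ref{cor}) and the bookkeeping identity $\omega_D^{2k}=2^k|D|^k\cdot\Omega_D^{2k}$. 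You instead open the black box: you check directly that $\mathcal{G}_{2k}$ has integral $q$-expansion from $b_k(\ell)\in\mathbb{Z}$, prove the CM-value principle by passing to the weight-zero quotient $f^{12}/\Delta^{k}$, which has algebraic $q$-expansion and hence takes algebraic values at the algebraic (CM) points of $X_0(4)$, handle non-fundamental $\tau$ by isogeny invariance of periods, and only then invoke Chowla--Selberg for $\mathbb{C}/\mathcal{O}_K$. Your exponent bookkeeping is right: $1/(2h'(D))=w_D/(4h(D))$ in all three cases, the $\pi^{-1/2}$ normalization matches, and your lemniscatic consistency check agrees with the paper's value $\omega_{-4}=\Gamma\left(\frac{1}{4}\right)^2/\left(\sqrt{2}\,\pi^{3/2}\right)$ used in the proof of Corollary \ref{cor}. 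What your route buys is self-containedness, plus the observation that the eta-quotient decomposition is not logically needed for Theorem \ref{main} itself; what the paper's route buys is brevity and the explicit identity (\ref{eta-quot}) that powers the numerical evaluations at $e^{-\pi}$ and $e^{-2\pi}$. The one step you flag rather than carry out---the exact transcendental normalization in the Chowla--Selberg formula---is precisely the content of the paper's citation, so your argument is a complete proof modulo a standard reference, merely a more granular one than the paper's.
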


Thanks to classical formulas of Ramanujan \cite{lost}, it is simple to explicitly evaluate $\mathcal{G}_{2k}\left(e^{-\pi}\right)$ and $\mathcal{G}_{2k}\left(e^{-2\pi}\right)$.  To make this precise, we define the rational number\footnote{In \cite{G}, Goswami refers to $\mathcal{Z}(2k)$ as $d_k$.  We use $\mathcal{Z}(2k)$ to emphasize that these numbers are simple rational multiples of $\zeta(2k)/\pi^{2k}$.}
\begin{equation}\label{dk}
\mathcal{Z}(2k):=-\frac{(-16)^kB_{2k}\left(4^k-1\right)}{8k}=4^{k-1}\left(4^k-1\right)(2k)!\cdot\frac{\zeta(2k)}{\pi^{2k}},
\end{equation}
where $B_{2k}$ is the index $2k$ Bernoulli number.  Furthermore, we let $(a;q)_\infty:=\prod_{n\geq0}\left(1-aq^n\right)$ denote the usual infinite $q$-Pochhammer symbol.  If $k\geq2$, then define $\a_{2k}(1),\dots,\a_{2k}(k-1)$ to be the unique rational numbers satisfying
\begin{equation}\label{eta-quot1}
\sum_{j=1}^{k-1}\a_{2k}(j)\cdot\frac{q^j\left(q^4;q^4\right)_\infty^{16j}(q;q)_\infty^{8j}}{\left(q^2;q^2\right)_\infty^{24j}}=\left(\mathcal{G}_{2k}(q)-\mathcal{Z}(2k)\cdot\frac{q^k\left(q^4;q^4\right)_\infty^{8k}}{\left(q^2;q^2\right)_\infty^{4k}}\right)\cdot\frac{(q;q)_\infty^{8k}\left(q^4;q^4\right)_\infty^{8k}}{\left(q^2;q^2\right)_\infty^{20k}}.
\end{equation}
Since the $j$th summand on the left is of the form $\a_{2k}(j)q^j+O\left(q^{j+1}\right)$, the $\a_{2k}(j)$ are easily computed by diagonalization.  In the case where $k=1$, there simply are no $\a_{2k}(j)$ numbers.  In terms of this notation, we obtain the following corollary.

\begin{corollary}\label{cor}
If $k$ is a positive integer and $a:=\sqrt{2}-1$, then
\begin{eqnarray*}
\mathcal{G}_{2k}\left(e^{-\pi}\right)&=&\left(\frac{\mathcal{Z}(2k)}{2^{7k}}+\frac{1}{2^{2k}}\sum_{j=1}^{k-1}\frac{\a_{2k}(j)}{2^{5j}}\right)\cdot\left(\frac{\Gamma\left(\frac{1}{4}\right)^4}{\pi^3}\right)^k,\\
\mathcal{G}_{2k}\left(e^{-2\pi}\right)&=&\left(\frac{\mathcal{Z}(2k)a^{2k}}{2^{9k}}+\frac{1}{2^{5k}a^{2k}}\sum_{j=1}^{k-1}\frac{\a_{2k}(j)a^{4j}}{2^{4j}}\right)\cdot\left(\frac{\Gamma\left(\frac{1}{4}\right)^4}{\pi^3}\right)^k.
\end{eqnarray*}
\end{corollary}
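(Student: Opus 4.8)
The plan is to reduce both evaluations to explicit singular values of the Dedekind eta function at the CM points $\tau=i/2$ and $\tau=i$, which correspond to $q=e^{-\pi}$ and $q=e^{-2\pi}$ respectively. First I would solve the defining relation \eqref{eta-quot1} for $\mathcal{G}_{2k}(q)$ itself and convert every $q$-Pochhammer symbol into a Dedekind eta value via $(q^r;q^r)_\infty = q^{-r/24}\eta(r\tau)$. A short computation of the total $q$-exponent in each resulting term shows that the fractional powers of $q$ cancel identically, so that I expect to obtain a genuine eta-quotient expansion
\[
\mathcal{G}_{2k}(q) = \mathcal{Z}(2k)\,\frac{\eta(4\tau)^{8k}}{\eta(2\tau)^{4k}} + \sum_{j=1}^{k-1}\a_{2k}(j)\,\eta(\tau)^{8j-8k}\,\eta(2\tau)^{20k-24j}\,\eta(4\tau)^{16j-8k},
\]
in which each summand is homogeneous of weight $2k$ in the three eta values (consistent with the Remark that $\mathcal{G}_{2k}$ is a weight $2k$ form on $\Gamma_0(4)$). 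For $k=1$ only the first term survives, recovering Sun's case.

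Next I would record the singular values that appear. Writing $C := \Gamma\!\left(\frac{1}{4}\right)/\pi^{3/4}$ and $a=\sqrt{2}-1$, I expect
\[
\eta(i)=2^{-1}C,\qquad \eta(i/2)=2^{-7/8}C,\qquad \eta(2i)=2^{-11/8}C,\qquad \eta(4i)=2^{-29/16}a^{1/4}C.
\]
Starting from the classical value $\eta(i)=\Gamma\!\left(\frac{1}{4}\right)/(2\pi^{3/4})$ and the transformation $\eta(-1/\tau)=\sqrt{-i\tau}\,\eta(\tau)$ gives $\eta(i/2)=\sqrt{2}\,\eta(2i)$ immediately; the remaining values I would pin down using the Weber-function identities $\mathfrak{f}\mathfrak{f}_1\mathfrak{f}_2=\sqrt{2}$ and $\mathfrak{f}^8=\mathfrak{f}_1^8+\mathfrak{f}_2^8$ at $\tau=i$ and $\tau=2i$, fed by $j(i)=1728$ and $j(2i)=66^3$. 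These are exactly the formulas of Ramanujan cited as \cite{lost}, so in the write-up they may simply be quoted.

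Finally I would substitute. For $\tau=i$ the relevant triple is $(\eta(i),\eta(2i),\eta(4i))$, and for $\tau=i/2$ it is $(\eta(i/2),\eta(i),\eta(2i))$. By weight homogeneity the accumulated power of $C$ in every summand is $C^{4k}=(\Gamma\!\left(\frac14\right)^4/\pi^3)^k$, so the target factor emerges automatically; it then remains only to collect the powers of $2$ — and, at $\tau=i$, the powers of $a$ contributed by $\eta(4i)$ — term by term. Tracking these exponents in the main term and in the $j$th summand reproduces the two displayed formulas, with the factors $2^{-7k},2^{-2k}2^{-5j}$ in the $e^{-\pi}$ case and $2^{-9k}a^{2k},\,2^{-5k}a^{-2k}\cdot 2^{-4j}a^{4j}$ in the $e^{-2\pi}$ case.

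The main obstacle is the singular value $\eta(4i)$. Whereas $\eta(i),\eta(2i),\eta(i/2)$ follow from elementary transformation properties, obtaining $\eta(4i)$ — and in particular extracting the factor $a=\sqrt{2}-1$ together with the correct power of $2$ — requires the genuinely arithmetic input of the class invariant at $\tau=2i$ (e.g.\ solving the cubic $X^3-66X-16=0$ arising from $j(2i)=66^3$ and recognizing $3\sqrt{2}-4=\sqrt{2}\,(\sqrt{2}-1)^2$). This is precisely the step that forces the appearance of $a$ in the second evaluation and is the one point where more than formal manipulation is needed.
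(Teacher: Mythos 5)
Your proposal is correct and follows essentially the paper's own route: the eta-quotient expansion you derive from (\ref{eta-quot1}) (after checking that the fractional $q$-powers cancel) is exactly the paper's identity (\ref{eta-quot}), your singular values $\eta(i/2)$, $\eta(i)$, $\eta(2i)$, $\eta(4i)$ agree with those in (\ref{etas}), and the resulting bookkeeping of powers of $2$ and of $a=\sqrt{2}-1$ reproduces both displayed evaluations. The only difference is cosmetic: the paper quotes Ramanujan's values of $f(-q)$ at $q=e^{-\pi},e^{-2\pi},e^{-4\pi},e^{-8\pi}$ directly from \cite{lost} rather than re-deriving $\eta(4i)$ via Weber functions and $j(2i)=66^3$, a shortcut you yourself note is available.
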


\begin{exs}
Here we illustrate Corollary \ref{cor} for $k=3$ and 4.  If $k=3$, then we have that
\begin{eqnarray*}
\mathcal{G}_6\left(e^{-\pi}\right)&=&\left(\frac{\mathcal{Z}(6)}{2^{21}}+\frac{1}{2^{12}}\right)\cdot\left(\frac{\Gamma\left(\frac{1}{4}\right)^4}{\pi^3}\right)^3,\\
\mathcal{G}_6\left(e^{-2\pi}\right)&=&\left(\frac{\mathcal{Z}(6)\left(\sqrt{2}-1\right)^6}{2^{27}}+\frac{1-\left(\sqrt{2}-1\right)^4}{2^{19}\left(\sqrt{2}-1\right)^2}\right)\cdot\left(\frac{\Gamma\left(\frac{1}{4}\right)^4}{\pi^3}\right)^3.
\end{eqnarray*}
If $k=4$, then we have that
\begin{eqnarray*}
\mathcal{G}_8\left(e^{-\pi}\right)&=&\left(\frac{\mathcal{Z}(8)}{2^{28}}+\frac{1}{2^{12}}\right)\cdot\left(\frac{\Gamma\left(\frac{1}{4}\right)^4}{\pi^3}\right)^4,\\
\mathcal{G}_8\left(e^{-2\pi}\right)&=&\left(\frac{\mathcal{Z}(8)\left(\sqrt{2}-1\right)^8}{2^{36}}+\frac{1-\left(\sqrt{2}-1\right)^4}{2^{21}}\right)\cdot\left(\frac{\Gamma\left(\frac{1}{4}\right)^4}{\pi^3}\right)^4.
\end{eqnarray*}
These examples will be explained further in Section \ref{exs}.
\end{exs}

This paper is organized as follows.  In Section \ref{G-S}, we recall the Goswami-Sun identities and the relation between $\mathcal{G}_{2k}(q)$ and modular forms.  In Section \ref{mf}, we recall essential facts about modular forms, and in Section \ref{proof}, we use these results to prove Theorem \ref{main} and Corollary \ref{cor}.  In Section \ref{exs}, we conclude with a discussion of the examples given above.

\section*{Acknowledgements}
We thank Krishnaswami Alladi and Ankush Goswami for their beautiful ideas and contributions.  We also thank Zhi-Wei Sun for inspiring this work.

\section{The Goswami-Sun Identities}\label{G-S}
We now recall Goswami's work.  Let $T_n=n(n+1)/2$ denote the $n$th triangular number, and define the generating function of $T_n$ to be $$\psi(q):=\sum_{n\geq0}q^{T_n}.$$  Then Goswami \cite[Theorems~3.1~and~3.2]{G} proves the following theorem.
\begin{theorem}\label{Gos}
For any positive integer $k$, we have that
\begin{equation*}
T_{2k}(\tau):=\mathcal{G}_{2k}(q)-\mathcal{Z}(2k)\cdot q^k\psi\left(q^2\right)^{4k}
\end{equation*}
is the Fourier expansion of a weight $2k$ cusp form on $\Gamma_0(4)$, where $q:=e^{2\pi i\tau}$ and $\tau\in\mathbb{H}$.
\end{theorem}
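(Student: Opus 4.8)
The plan is to exhibit $T_{2k}$ as the difference of two explicit holomorphic weight $2k$ forms on $\Gamma_0(4)$ and then to show that this difference vanishes at every cusp. The first step is purely combinatorial: unwinding the definitions of $a_k(m)$ and $b_k(\ell)$ in terms of Stirling numbers, I would prove the Eulerian-polynomial identity
\[
\frac{z\,P_{2k-2}^e(z)}{(1-z)^{2k}}=\sum_{m\geq1}m^{2k-1}z^m,
\]
recognizing $P_{2k-2}^e$ as a normalization of the Eulerian polynomial $A_{2k-1}$. Substituting $z=q^{4n+2}$ in the even case, and using $(1-z^2)^{2k}=(1-z)^{2k}(1+z)^{2k}$ to split the definition of $P_{4k-2}^o$ in the odd case, I would interchange the sums over $n$ and $m$ and collapse the resulting geometric series. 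Setting $S(q):=\sum_{N\geq1}\sigma_{2k-1}(N)q^N$ and using $\frac{q^m}{1-q^{2m}}=\frac{q^m}{1-q^m}-\frac{q^{2m}}{1-q^{2m}}$, this rewrites Goswami's series as an explicit $\Q$-linear combination of $S(q)$, $S(q^2)$, $S(q^4)$: namely $S(q)-(1+2^{2k-1})S(q^2)+2^{2k-1}S(q^4)$ when $k$ is odd, and $2^{2k-1}\bigl(S(q^2)-S(q^4)\bigr)$ when $k$ is even.

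Because $S(q^d)=\frac{B_{2k}}{4k}\bigl(1-E_{2k}(d\tau)\bigr)$, this presents $\mathcal{G}_{2k}$ as a linear combination of $E_{2k}(\tau)$, $E_{2k}(2\tau)$, $E_{2k}(4\tau)$. For $2k\geq4$ each $E_{2k}(d\tau)$ with $d\mid4$ already lies in $M_{2k}(\Gamma_0(4))$, so $\mathcal{G}_{2k}\in M_{2k}(\Gamma_0(4))$ at once; the only delicate case is $k=1$, where $E_2$ is merely quasimodular, and there I would check that the combination $-E_2(\tau)+3E_2(2\tau)-2E_2(4\tau)$ satisfies $-1+\tfrac{3}{2}-\tfrac{1}{2}=0$, exactly the vanishing that kills the non-holomorphic anomaly of $E_2$ and leaves a genuine weight $2$ form. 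For the subtracted term, Gauss's product $\psi(q)=(q^2;q^2)_\infty^2/(q;q)_\infty$ gives $q^k\psi(q^2)^{4k}=\eta(4\tau)^{8k}/\eta(2\tau)^{4k}$, an eta quotient of weight $\tfrac{1}{2}(8k-4k)=2k$; with exponents $r_2=-4k$, $r_4=8k$ the Ligozat criteria hold (the sums $\sum_\delta\delta r_\delta=24k$ and $\sum_\delta(4/\delta)r_\delta=0$ are divisible by $24$, the character is trivial since $\prod_\delta\delta^{r_\delta}=2^{12k}$ is a square, and the order at each cusp is nonnegative), so it too lies in $M_{2k}(\Gamma_0(4))$. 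Hence $T_{2k}=\mathcal{G}_{2k}-\mathcal{Z}(2k)\,\eta(4\tau)^{8k}/\eta(2\tau)^{4k}$ is a holomorphic weight $2k$ form on $\Gamma_0(4)$, and it remains only to prove that it is cuspidal.

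The group $\Gamma_0(4)$ has three cusps, $\infty$, $0$, and $1/2$. Vanishing of $T_{2k}$ at $\infty$ is immediate from the $q$-expansions, since $\mathcal{G}_{2k}$ has no constant term and the eta quotient begins with $q^k$. The crux, and the step I expect to be the main obstacle, is the two finite cusps: the order computation above shows the eta quotient does \emph{not} vanish at either $0$ or $1/2$, and the Eisenstein form $\mathcal{G}_{2k}$ need not vanish there either. I would compute the four relevant constant terms explicitly, those of $\mathcal{G}_{2k}$ from the transformation of $E_{2k}(d\tau)$ under matrices carrying $\infty$ to $0$ and to $1/2$, and those of the eta quotient from $\eta(-1/\tau)=\sqrt{-i\tau}\,\eta(\tau)$, and then verify that the single scalar $\mathcal{Z}(2k)$ of \eqref{dk} matches both constant terms simultaneously. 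The genuine content is that these two conditions are compatible: the ratio of the constant terms of $\mathcal{G}_{2k}$ at $0$ and $1/2$ must equal the corresponding ratio for the eta quotient, so that one scalar can satisfy both at once. This compatibility is what forces the Bernoulli-number normalization $\mathcal{Z}(2k)\in\Q\cdot\zeta(2k)/\pi^{2k}$, and it is where the construction of $P_{4k-2}^o$ and the precise value in \eqref{dk} are really used; a reassuring special case is $k=1$, where $S_2(\Gamma_0(4))=0$ forces $T_2\equiv0$, which one verifies directly using $\sigma(2N)=3\sigma(N)$ for odd $N$. Once the two finite cusps are matched, $T_{2k}$ vanishes at all three cusps and is therefore a weight $2k$ cusp form on $\Gamma_0(4)$, as claimed.
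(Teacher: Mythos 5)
You should know at the outset that the paper contains no proof of this statement: Theorem \ref{Gos} is quoted directly from Goswami \cite[Theorems~3.1~and~3.2]{G}, so there is no internal argument to compare against, and your outline is in substance the route of the cited source. The parts of your proposal that can be checked are correct. The Eulerian identity $zP_{2k-2}^e(z)/(1-z)^{2k}=\sum_{m\geq1}m^{2k-1}z^m$ holds with the paper's Stirling-number normalization (e.g.\ $P_2^e(z)=1+4z+z^2$), and it does yield $\mathcal{G}_{2k}(q)=S(q)-(1+2^{2k-1})S(q^2)+2^{2k-1}S(q^4)$ for $k$ odd and $2^{2k-1}(S(q^2)-S(q^4))$ for $k$ even, where $S(q)=\sum_{N\geq1}\sigma_{2k-1}(N)q^N$. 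Your quasimodular cancellation $-1+\tfrac{3}{2}-\tfrac{1}{2}=0$ at $k=1$ is right, as are the identity $q^k\psi(q^2)^{4k}=\eta(4\tau)^{8k}/\eta(2\tau)^{4k}$ (which is $F(\tau)^k$, by (\ref{etaquotientforms})) and your Ligozat computation showing this eta quotient has order exactly zero at the cusps $0$ and $1/2$.

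The one place where your text is a plan rather than a proof is exactly where you flag it: the verification that the \emph{single} scalar $\mathcal{Z}(2k)$ kills the constant terms of $T_{2k}$ at both finite cusps simultaneously. As written you assert this compatibility instead of proving it, and it is the entire content of Goswami's Theorems 3.1 and 3.2; your proposed method (transforming $E_{2k}(d\tau)$, $d\mid 4$, to the cusps $0$ and $1/2$, and using $\eta(-1/\tau)=\sqrt{-i\tau}\,\eta(\tau)$) does work, but until it is executed the proof is incomplete. Note that in the paper's own framework the same condition reappears as Proposition \ref{alphas}: writing $\mathcal{G}_{2k}=\sum_j\a_{2k}(j)F^j\theta^{4k-4j}$, the absent constant term gives condition \textit{(i)}, subtracting $\mathcal{Z}(2k)F^k$ gives \textit{(ii)}, and your two-cusp compatibility is precisely \textit{(iii)}, $\sum_j\a_{2k}(j)(1/16)^j=0$, which is not automatic. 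One concrete caution for when you carry out the check: the two expressions for $\mathcal{Z}(2k)$ in (\ref{dk}) are inconsistent for $k\geq2$ --- at $k=2$ the Bernoulli form gives $8$ while the $\zeta$-form gives $16$, and a direct computation (since $S_4(\Gamma_0(4))=\{0\}$ one must have $\mathcal{G}_4=8(S(q^2)-S(q^4))=\mathcal{Z}(4)F^2$, and comparing $q^2$-coefficients gives $\mathcal{Z}(4)=8$) confirms that the Bernoulli expression is the correct one, the $\zeta$-expression being off by a factor of $k$. The cases $k=1,2$, where the vanishing of the cusp spaces forces $T_2=T_4\equiv0$, are useful anchors for your general constant-term computation, exactly as your $\sigma(2N)=3\sigma(N)$ check suggests.
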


In Section \ref{mf}, we apply the theory of modular forms and complex multiplication to study $T_{2k}(\tau)$ and $q^k\psi\left(q^2\right)^{4k}$ at all CM points.

\section{Some Facts about Modular Forms}\label{mf}

Here we recall some basic facts about modular forms.

\subsection{CM Values of Modular Forms}

In Goswami's work \cite{G}, the limit of the $q$-series identity in Theorem \ref{Gos} as $q\rightarrow1$ gives the constant term of a weight $2k$ Eisenstein series, which is described in terms of $\zeta$-values.  Our work depends on the values of modular forms at CM points.

Classically, the Chowla-Selberg formula \cite{CS} was developed in order to evaluate the Dedekind eta-function $$\eta(\tau):=q^{1/24}\prod_{n=1}^\infty\left(1-q^n\right)$$ at CM points whose discriminants are fundamental discriminants.  This was refined by van der Poorten and Williams \cite[Theorem 9.3]{PW}, who gave a closed formula for values of $\eta(\tau)$ in which $\tau$ is still required to be a CM point whose discriminant is fundamental.  More generally, we have the following theorem (for example, see p. 84 of \cite{123}) regarding evaluations of all modular forms at all CM points.

\begin{theorem}\label{Omega}
Suppose that $D<0$ is the fundamental discriminant of the imaginary quadratic field $\mathbb{Q}(\sqrt{D})$.  Then the number $\Omega_D\in\mathbb{C}^*$ defined by
\begin{equation*}
\Omega_D:=\frac{1}{\sqrt{2\pi|D|}}\left(\prod_{j=1}^{|D|-1}\Gamma\left(\frac{j}{|D|}\right)^{\chi_D(j)}\right)^{\frac{1}{2h'(D)}}
\end{equation*}
has the property that $f(\tau)\in\overline{\mathbb{Q}}\cdot\Omega_D^k$ for all $\tau\in\mathbb{H}\cap\mathbb{Q}(\sqrt{D}),$ all $k\in\mathbb{Z}$, and all modular forms $f$ of weight $k$ with algebraic Fourier coefficients.
\end{theorem}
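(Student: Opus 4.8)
The plan is to deduce the theorem from two classical pillars of the theory of complex multiplication: the algebraicity of the values of weight-zero modular functions at CM points, and the Chowla-Selberg formula. The role of the transcendental period will be played by $\eta(\tau)^2$, where $\tau\in\mathbb{H}\cap\mathbb{Q}(\sqrt{D})$ is the CM point in question. First I would fix such a $\tau$ and let $f$ be any modular form of weight $k\in\mathbb{Z}$ with algebraic Fourier coefficients on a congruence subgroup. Since $\eta(\tau)^2$ is a nonvanishing form of weight $1$ with (rational) Fourier coefficients, the quotient
\[
F:=\frac{f}{\eta^{2k}}
\]
is a meromorphic modular form of weight $0$. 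Its $\eta$-multiplier has finite order, so after passing to a suitable congruence subgroup $F$ is an honest modular function; moreover $F$ has algebraic Fourier coefficients, since $\eta(\tau)^{2k}=q^{k/12}\prod_{n\geq1}(1-q^n)^{2k}$ has an inverse whose $q$-expansion again has integer coefficients, so $F$ is a Laurent series in $q^{1/12}$ with algebraic coefficients.

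The key input is then that $F(\tau)\in\overline{\mathbb{Q}}$. This is the main theorem of complex multiplication: a modular function with algebraic Fourier coefficients takes algebraic values at CM points. Concretely, $\tau$ corresponds to the elliptic curve $\mathbb{C}/(\mathbb{Z}+\mathbb{Z}\tau)$ with CM by an order in $\mathbb{Q}(\sqrt{D})$; its $j$-invariant $j(\tau)$ is a singular modulus, hence algebraic, so the curve is defined over $\overline{\mathbb{Q}}$, and the values at $\tau$ of modular functions of higher level are algebraic by Shimura's reciprocity law (see, e.g., \cite{123}). Granting this,
\[
f(\tau)=F(\tau)\cdot\eta(\tau)^{2k}\in\overline{\mathbb{Q}}\cdot\eta(\tau)^{2k},
\]
so the entire content of the theorem is reduced to identifying the period $\eta(\tau)^2$ with $\Omega_D$ up to an algebraic factor.

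This last identification is supplied by the Chowla-Selberg formula \cite{CS}, in the refined pointwise form of van der Poorten and Williams \cite[Theorem 9.3]{PW}, which evaluates $\eta$ at a CM point of fundamental discriminant $D$ as an algebraic multiple of $\big(\prod_{j=1}^{|D|-1}\Gamma(j/|D|)^{\chi_D(j)}\big)^{1/4h'(D)}$ times an explicit power of $\pi^{-1}$ and $|D|$. One verifies that the normalizing exponent $1/(2h'(D))$ built into $\Omega_D$ is exactly the factor $w/(4h)$, with $w$ the number of units, appearing in Chowla-Selberg, for each of the cases $D=-3$, $D=-4$, and $D<-4$; this is precisely the reason for the definition of $h'(D)$. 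Hence $\eta(\tau)^2\in\overline{\mathbb{Q}}\cdot\Omega_D$, and combining with the previous paragraph gives $f(\tau)\in\overline{\mathbb{Q}}\cdot\Omega_D^k$. The routine parts are the multiplier and Fourier-coefficient bookkeeping and the matching of normalizations; the genuine obstacles are the two deep transcendence-theoretic inputs, namely the algebraicity of CM values of modular functions, which powers the quotient reduction, and the Chowla-Selberg evaluation, which alone pins the otherwise unknown transcendental period $\eta(\tau)^2$ to the specified ratio of $\Gamma$-values.
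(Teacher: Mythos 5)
Your route is, in substance, the standard proof of this result: the paper itself offers no argument for Theorem \ref{Omega} at all, deferring to p.~84 of \cite{123}, and the proof given there follows exactly your skeleton --- divide by $\eta^{2k}$ to reduce to a weight-zero modular function with algebraic Fourier coefficients, invoke the CM theory fact that such functions take algebraic values at CM points, and then pin the period $\eta(\tau)^2$ to $\Omega_D$ via Chowla--Selberg. Your bookkeeping of the exponent is also right: $\tfrac{1}{2h'(D)}=\tfrac{w}{4h}$ in all three cases $D=-3,-4,D<-4$, which is indeed why $h'(D)$ is defined as it is.

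There is, however, one genuine gap as written. The Chowla--Selberg formula \cite{CS} and the van der Poorten--Williams refinement \cite[Theorem 9.3]{PW} evaluate $\eta$ only at CM points whose \emph{own} discriminant is the fundamental discriminant $D$, whereas the theorem quantifies over all $\tau\in\mathbb{H}\cap\mathbb{Q}(\sqrt{D})$, including points attached to non-maximal orders; the paper flags this distinction explicitly in Section \ref{mf}, and it is not a corner case here, since the application in Corollary \ref{cor} uses $\tau=i/2$ and the eta-values at $2i$ and $4i$, whose discriminants $-16$ and $-64$ are not fundamental. You fixed an arbitrary $\tau\in\mathbb{H}\cap\mathbb{Q}(\sqrt{D})$ at the outset but then applied the fundamental-discriminant evaluation to that same $\tau$, which is a leap. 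The fix is short and uses a tool you already deployed: choose an auxiliary CM point $\tau_0$ of discriminant exactly $D$, and write $\tau=\gamma\tau_0$ with $\gamma=\left(\begin{smallmatrix} a & b\\ c & d\end{smallmatrix}\right)$ an integral matrix of positive determinant. Then $z\mapsto\Delta(\gamma z)(cz+d)^{-12}/\Delta(z)$ is a modular function on a congruence subgroup with algebraic Fourier coefficients, hence takes an algebraic value at $\tau_0$; since $c\tau_0+d$ is a quadratic irrationality, this gives $\Delta(\tau)\in\overline{\mathbb{Q}}\cdot\Delta(\tau_0)$, and extracting twelfth roots shows $\eta(\tau)^2/\eta(\tau_0)^2\in\overline{\mathbb{Q}}$, so the Chowla--Selberg evaluation at $\tau_0$ transfers to $\tau$. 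With that one paragraph added, your argument is complete and agrees with the proof in the source the paper cites.
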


In the special cases of the CM points $\tau\in\{i/2,i,2i,4i\}$, Theorem \ref{Omega} can be made explicit using the following formulas of Ramanujan (see p. 326 of \cite{lost}),
\begin{align*}
f\left(-e^{-\pi}\right)&=\frac{\pi^{\frac{1}{4}}e^{\frac{\pi}{24}}}{2^{\frac{3}{8}}\Gamma\left(\frac{3}{4}\right)},\\
f\left(-e^{-2\pi}\right)&=\frac{\pi^{\frac{1}{4}}e^{\frac{\pi}{12}}}{2^{\frac{1}{2}}\Gamma\left(\frac{3}{4}\right)},\\
f\left(-e^{-4\pi}\right)&=\frac{\pi^{\frac{1}{4}}e^{\frac{\pi}{6}}}{2^{\frac{7}{8}}\Gamma\left(\frac{3}{4}\right)},\\
f\left(-e^{-8\pi}\right)&=\frac{\pi^{\frac{1}{4}}\left(\sqrt{2}-1\right)^{\frac{1}{4}}e^{\frac{\pi}{3}}}{2^{\frac{21}{16}}\Gamma\left(\frac{3}{4}\right)},
\end{align*}
where $f(-q):=\prod_{n\geq1}\left(1-q^n\right)$.  The above formulas can be rewritten in terms of the Dedekind eta-function by noticing that $\eta(\tau)=q^{1/24}f(-q)$.  By applying the functional equation of the $\Gamma$-function, namely $\Gamma(1-z)\Gamma(z)=\pi/\sin(\pi z)$ for $z\not\in\mathbb{Z}$, in terms of $$\Omega_{-4}=\frac{1}{2\sqrt{2\pi}}\cdot\frac{\Gamma\left(\frac{1}{4}\right)}{\Gamma\left(\frac{3}{4}\right)},$$ we obtain
\begin{equation}\label{etas}
\eta(i/2)=2^{\frac{1}{8}}\cdot\Omega_{-4}^{\frac{1}{2}},\hspace{.5cm}\eta(i)=\Omega_{-4}^{\frac{1}{2}},\hspace{.5cm}\eta(2i)=\frac{1}{2^{\frac{3}{8}}}\cdot\Omega_{-4}^{\frac{1}{2}},\hspace{.5cm}\eta(4i)=\frac{\left(\sqrt{2}-1\right)^{\frac{1}{4}}}{2^{\frac{13}{16}}}\cdot\Omega_{-4}^{\frac{1}{2}}.
\end{equation}
We shall make use of these formulas to prove Corollary \ref{cor}.

\subsection{Modular Forms on $\Gamma_0(4)$}\label{mfs}

Here we recall standard facts about modular forms on $\Gamma_0(4)$.  Recall that the theta function given by
\begin{equation*}
\theta(\tau):=\sum_{n=-\infty}^\infty q^{n^2}
\end{equation*}
is a weight $\frac{1}{2}$ modular form on $\Gamma_0(4)$, and that the  weight 2 Eisenstein series
\begin{equation*}
F(\tau):=\sum_{n=0}^\infty\sigma_1(2n+1)q^{2n+1}
\end{equation*}
is a modular form on $\Gamma_0(4)$ as well, where $\sigma_1(n)$ denotes the sum of the positive divisors of $n$.  It is known that every modular form on $\mathrm{SL}_2(\mathbb{Z})$ and $\Gamma_0(4)$ can be expressed as a rational function in $\eta(\tau)$, $\eta(2\tau)$, and $\eta(4\tau)$ (see \cite[Theorem~1.67]{Web} for $\mathrm{SL}_2(\mathbb{Z})$ and \cite{O} for $\Gamma_0(4)$).  In the case of $\Gamma_0(4)$, this fact relies on the observation that $F(\tau)$ and $\theta(\tau)$ are given in terms of Dedekind eta-quotients in the following way:
\begin{equation}\label{etaquotientforms}
\theta(\tau)=\frac{\eta(2\tau)^5}{\eta(\tau)^2\eta(4\tau)^2},\hspace{1cm}F(\tau)=\frac{\eta(4\tau)^8}{\eta(2\tau)^4}.
\end{equation}

It is also very well known that the two Eisenstein series $E_4(\tau)$ and $E_6(\tau)$ generate the algebra of all modular forms on $\mathrm{SL}_2(\mathbb{Z})$ (for example, see \cite[Theorem~1.23]{Web}).  The analogous statement for modular forms on $\Gamma_0(4)$ involves the forms $F(\tau)$ and $\theta(\tau)$.  Namely, the following complete description of the spaces $M_k\left(\Gamma_0(4),\psi_k\right)$ for $k\in\frac{1}{2}\mathbb{N}$ and $$\psi_k:=\begin{cases}\chi_0,&\mbox{if }k\in2\mathbb{Z}\text{ or }k\in\frac{1}{2}+\mathbb{Z}, \\ \left(\frac{-4}{\bullet}\right),&\mbox{if }k\in1+2\mathbb{Z},\end{cases}$$ where $\chi_0$ is the trivial character, is proved in \cite{C,K}.  As a graded algebra, we have that
\begin{equation*}
\bigoplus_{k\in\frac{1}{2}\mathbb{Z}}M_k\left(\Gamma_0(4),\psi_k\right)\cong\mathbb{C}[F,\theta].
\end{equation*}
Moreover, we have the following proposition (see \cite[Corollary 3.3]{O}) describing canonical representations of modular forms on $\Gamma_0(4)$ in terms of $F(\tau)$ and $\theta(\tau)$.

\begin{proposition}\label{alphas}
If $k\in\frac{1}{2}\mathbb{N}$, then each $f(\tau)\in M_k\left(\Gamma_0(4),\psi_k\right)$ has a unique expansion in terms of $F(\tau)$ and $\theta(\tau)$ of the form
\begin{equation}\label{basis}
f(\tau)=\sum_{j=0}^{[k/2]}\a_k(j)F(\tau)^j\theta(\tau)^{2k-4j}.
\end{equation}
Moreover, $f(\tau)$ is a cusp form if and only if the coefficients $\a_k(j)$ satisfy:
\begin{enumerate}[\hspace{.5cm}(i)]
\item $\a_k(0)=0$,\\
\item $\a_{k}(k/2)=0$ when $k\in2\mathbb{Z}$, and\\
\item $\sum\limits_{j=0}^{[k/2]}\a_k(j)\left(\frac{1}{16}\right)^j=0$.
\end{enumerate}
\end{proposition}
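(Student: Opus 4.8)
The plan is to deduce both assertions from the structure of the graded algebra $\bigoplus_k M_k\left(\Gamma_0(4),\psi_k\right)\cong\mathbb{C}[F,\theta]$ together with an analysis of the three cusps of $\Gamma_0(4)$. For existence and uniqueness, I would argue as follows. Since the displayed isomorphism identifies the full graded algebra with the polynomial ring $\mathbb{C}[F,\theta]$, every $f\in M_k\left(\Gamma_0(4),\psi_k\right)$ is a $\mathbb{C}$-linear combination of monomials $F^a\theta^b$. Such a monomial has weight $2a+b/2$, so it contributes to weight $k$ exactly when $b=2k-4a$; writing $j=a$ and imposing $b\geq0$ forces $0\leq j\leq[k/2]$. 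This yields the asserted expansion, and uniqueness is immediate from the algebraic independence of $F$ and $\theta$ (that is, from $\mathbb{C}[F,\theta]$ being a genuine polynomial ring), which makes the monomials $F^j\theta^{2k-4j}$ for $0\leq j\leq[k/2]$ linearly independent.

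For the cuspidality criterion, I would use that $f$ is a cusp form precisely when it vanishes at each of the three cusps $\infty$, $0$, and $1/2$ of $\Gamma_0(4)$, so the task is to convert vanishing at each cusp into a linear condition on the $\a_k(j)$. The essential input is the cusp behavior of $F$ and $\theta$, which I would extract from the eta-quotient representations in \eqref{etaquotientforms} by means of the standard valuation formula for eta-quotients at cusps (see \cite{Web}). A direct computation with that formula shows that among the three weight-$2$ forms $F$, $\theta^4$, and $\theta^4-16F$, each vanishes at exactly one cusp: $F$ vanishes only at $\infty$, $\theta^4$ vanishes only at $1/2$, and $\theta^4-16F$ vanishes only at $0$. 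Equivalently, in classical notation one has $16F=\theta_2^4$ and $\theta^4-16F=\theta_4^4$, so this is just Jacobi's identity $\theta_3^4=\theta_2^4+\theta_4^4$ together with the permutation of the even theta constants by coset representatives of $\Gamma_0(4)$ in $\mathrm{SL}_2(\mathbb{Z})$.

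Granting this, the three conditions can be read off directly. At $\infty$, every term with $j\geq1$ carries a factor of $F$ and hence vanishes, so the constant term of $f$ equals $\a_k(0)$ (since $\theta\to1$); vanishing at $\infty$ is therefore condition (i). At the cusp $1/2$, $\theta$ vanishes while $F$ does not, so the term $F^j\theta^{2k-4j}$ vanishes there unless $2k-4j=0$; this can happen only when $k$ is even, in which case the lone surviving term is $\a_k(k/2)F^{k/2}$, whose nonvanishing forces condition (ii). For $k$ odd every exponent $2k-4j$ is positive, so vanishing at $1/2$ is automatic and no condition is needed. At the cusp $0$, $\theta^4$ is nonzero and $F/\theta^4\to1/16$ (this is exactly the statement that $\theta^4-16F$ vanishes at $0$); writing $f=\theta^{2k}\sum_j\a_k(j)\left(F/\theta^4\right)^j$ and dividing by the nonzero quantity $\theta^{2k}$, the value of $f$ at $0$ is a nonzero multiple of $\sum_{j=0}^{[k/2]}\a_k(j)(1/16)^j$, and vanishing gives condition (iii).

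I expect the main obstacle to be the cusp analysis at $0$ and $1/2$: one must correctly identify the leading terms of $F$ and $\theta$ in the local parameter at each cusp, and for $\theta$ this involves a half-integral weight with a nontrivial multiplier, so the a priori orders of vanishing are fractional. Passing to the integer-weight forms $F$, $\theta^4$, and $\theta^4-16F$ (equivalently, invoking Jacobi's theta relation) is precisely what sidesteps these multiplier subtleties, yields integer orders of vanishing, and pins down the constant $1/16$ appearing in condition (iii).
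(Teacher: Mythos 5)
Your proof is correct and complete; note that the paper itself does not prove Proposition \ref{alphas} at all, but simply quotes it from \cite[Corollary 3.3]{O}, so your write-up supplies an argument the paper omits, and it is essentially the standard one underlying the cited result. The existence and uniqueness half is exactly the right bookkeeping: since the graded algebra is the polynomial ring $\mathbb{C}[F,\theta]$ with $F$ of weight $2$ and $\theta$ of weight $\frac{1}{2}$, the monomials of weight $k$ are precisely $F^j\theta^{2k-4j}$ with $0\leq j\leq[k/2]$, and algebraic independence gives uniqueness. The cuspidality half via the three cusps $\infty$, $1/2$, $0$ is also sound, and your identification $16F=\theta_2^4$, $\theta^4=\theta_3^4$, $\theta^4-16F=\theta_4^4$, with Jacobi's identity and the coset action permuting the cusps at which these vanish, is the cleanest way to get the needed leading behavior; in particular $F/\theta^4\rightarrow1/16$ at the cusp $0$ correctly produces the constant in condition (iii), and your reading of conditions (i) and (ii) from vanishing at $\infty$ and $1/2$ is right. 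Two small points of wording rather than substance: where you say ``for $k$ odd every exponent $2k-4j$ is positive,'' you should say ``for $k\notin2\mathbb{Z}$,'' since $k\in\frac{1}{2}\mathbb{N}$ may be half-integral --- the argument is unchanged because $2k-4j=0$ forces $k=2j\in2\mathbb{Z}$, which is why (ii) is only imposed in that case. And at the cusp $1/2$ one cannot literally normalize by $\theta^{2k}$ (it vanishes there), so your term-by-term value argument applied to $f$ itself is the correct mechanism at that cusp, with the weight-$0$ function $f/\theta^{2k}=P\left(F/\theta^4\right)$ reserved for the cusps $\infty$ and $0$; your closing paragraph about passing to the integer-weight forms $F$, $\theta^4$, $\theta^4-16F$ to sidestep the half-integral multiplier correctly locates the only genuinely delicate step.
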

Combining the above decomposition in terms of $F(\tau)$ and $\theta(\tau)$ for the cusp form $T_{2k}(\tau)$ with the eta-quotients in (\ref{etaquotientforms}), we obtain new expressions for the series $\mathcal{G}_{2k}(q)$ in Section \ref{proof}.

\section{Proof of Theorem \ref{main} and Corollary \ref{cor}}\label{proof}

We may write the product on the right hand side of Theorem \ref{Gos} in terms of eta-quotients as follows:
\begin{equation}
\label{eta}\mathcal{Z}(2k)\cdot q^k\psi\left(q^2\right)^{4k}=\mathcal{Z}(2k)\cdot q^k\prod_{n=1}^\infty\frac{\left(1-q^{4n}\right)^{4k}}{\left(1-q^{4n-2}\right)^{4k}}=\mathcal{Z}(2k)\cdot\frac{\eta(4\tau)^{8k}}{\eta(2\tau)^{4k}}.
\end{equation}
Now, by (\ref{basis})\footnote{The weights in Theorem 
\ref{main} are $2k$ as opposed to $k$ in the section above.} we can express the cusp form $T_{2k}(\tau)$ as
\begin{equation}\label{Tinbasis}
T_{2k}(\tau)=\sum_{j=0}^k\a_{2k}(j)F(\tau)^j\theta(\tau)^{4k-4j},
\end{equation}
and by (\ref{etaquotientforms}) we can write (\ref{Tinbasis}) in terms of eta-quotients in the following way:
\begin{equation*}
T_{2k}(\tau)=\frac{\eta(2\tau)^{20k}}{\eta(\tau)^{8k}\eta(4\tau)^{8k}}\sum_{j=0}^k\a_{2k}(j)\cdot\frac{\eta(4\tau)^{16j}\eta(\tau)^{8j}}{\eta(2\tau)^{24j}}.
\end{equation*}
By Proposition \ref{alphas} \textit{(i)} and \textit{(ii)}, we may simplify this expression to
\begin{equation}\label{Ts}
T_{2k}(\tau)=\frac{\eta(2\tau)^{20k}}{\eta(\tau)^{8k}\eta(4\tau)^{8k}}\sum_{j=1}^{k-1}\a_{2k}(j)\cdot\frac{\eta(4\tau)^{16j}\eta(\tau)^{8j}}{\eta(2\tau)^{24j}}.
\end{equation}
Now, combining (\ref{eta}) with (\ref{Ts}), we see that the series $\mathcal{G}_{2k}(q)$ can be expressed as
\begin{equation}\label{eta-quot}
\mathcal{G}_{2k}(q)=\mathcal{Z}(2k)\cdot\frac{\eta(4\tau)^{8k}}{\eta(2\tau)^{4k}}+\frac{\eta(2\tau)^{20k}}{\eta(\tau)^{8k}\eta(4\tau)^{8k}}\sum_{j=1}^{k-1}\a_{2k}(j)\cdot\frac{\eta(4\tau)^{16j}\eta(\tau)^{8j}}{\eta(2\tau)^{24j}}.
\end{equation}
From the above expression for $\mathcal{G}_{2k}(q)$, it is clear that if $k\geq1$, then the $\a_{2k}(j)$ are the unique rational numbers such that
\begin{equation*}
\sum_{j=1}^{k-1}\a_{2k}(j)\cdot\frac{\eta(4\tau)^{16j}\eta(\tau)^{8j}}{\eta(2\tau)^{24j}}=\left(\mathcal{G}_{2k}(q)-\mathcal{Z}(2k)\cdot\frac{\eta(4\tau)^{8k}}{\eta(2\tau)^{4k}}\right)\cdot\frac{\eta(\tau)^{8k}\eta(4\tau)^{8k}}{\eta(2\tau)^{20k}}.
\end{equation*}
This implies the definition for the $\a_{2k}(j)$ in (\ref{eta-quot1}).

Theorem \ref{Omega} along with (\ref{eta-quot}) immediately imply that evaluations of the Goswami-Sun series at CM points $\tau\in\mathbb{H}\cap\mathbb{Q}(\sqrt{D})$ give values in $\overline{\mathbb{Q}}\cdot\Omega_D^{2k}$, which proves Theorem \ref{main} because $$\omega_D^{2k}=2^k|D|^k\cdot\Omega_D^{2k}.$$\qed

\begin{proof}[Proof of Corollary \ref{cor}]
If $D=-4$, then $\mathbb{Q}(\sqrt{D})=\mathbb{Q}(i)$ and by (\ref{omega}) we have $$\omega_{-4}=\frac{1}{\sqrt{\pi}}\cdot\frac{\Gamma\left(\frac{1}{4}\right)}{\Gamma\left(\frac{3}{4}\right)}.$$  We apply the functional equation of the $\Gamma$-function to rewrite $\omega_{-4}$ as $$\omega_{-4}=\frac{\Gamma\left(\frac{1}{4}\right)^2}{\sqrt{2}\pi^{3/2}}.$$  In particular, applying the values of $\eta(\tau)$ in (\ref{etas}) to all of the eta-functions in (\ref{eta-quot}), we evaluate $\mathcal{G}_{2k}\left(e^{2\pi i\tau}\right)$ at $\tau=\frac{i}{2}$ and $\tau=i$ to obtain the values in Corollary \ref{cor}.
\end{proof}

\section{Examples}\label{exs}

\begin{example}\label{3}
If $k=3$, then Theorem \ref{main} becomes
\begin{equation*}
\mathcal{G}_6(q)=\mathcal{Z}(6)\cdot\frac{\eta(4\tau)^{24}}{\eta(2\tau)^{12}}+\frac{\eta(2\tau)^{60}}{\eta(\tau)^{24}\eta(4\tau)^{24}}\left(\a_6(1)\cdot\frac{\eta(4\tau)^{16}\eta(\tau)^8}{\eta(2\tau)^{24}}+\a_6(2)\cdot\frac{\eta(4\tau)^{32}\eta(\tau)^{16}}{\eta(2\tau)^{48}}\right),
\end{equation*}
and we calculate that $\a_6(1)=1$ and $\a_6(2)=-16$.  Then we have that
\begin{equation*}
\mathcal{G}_6(q)=\mathcal{Z}(6)\cdot\frac{\eta(4\tau)^{24}}{\eta(2\tau)^{12}}+\frac{\eta(2\tau)^{60}}{\eta(\tau)^{24}\eta(4\tau)^{24}}\left(\frac{\eta(4\tau)^{16}\eta(\tau)^8}{\eta(2\tau)^{24}}-16\cdot\frac{\eta(4\tau)^{32}\eta(\tau)^{16}}{\eta(2\tau)^{48}}\right).
\end{equation*}
Corollary \ref{cor} in this case gives
\begin{equation*}
\mathcal{G}_6\left(e^{-\pi}\right)=\left(\frac{\mathcal{Z}(6)}{2^{21}}+\frac{1}{2^{12}}\right)\cdot\left(\frac{\Gamma\left(\frac{1}{4}\right)^4}{\pi^3}\right)^3=0.0633804556\ldots
\end{equation*}
and
\begin{equation*}
\mathcal{G}_6\left(e^{-2\pi}\right)=\left(\frac{\mathcal{Z}(6)\left(\sqrt{2}-1\right)^6}{2^{27}}+\frac{1-\left(\sqrt{2}-1\right)^4}{2^{19}\left(\sqrt{2}-1\right)^2}\right)\cdot\left(\frac{\Gamma\left(\frac{1}{4}\right)^4}{\pi^3}\right)^3=0.0018690318\ldots.
\end{equation*}
\end{example}

\begin{example}\label{4}
If $k=4$, then Theorem \ref{main} becomes
\begin{align*}
\mathcal{G}_8(&q)=\mathcal{Z}(8)\cdot\frac{\eta(4\tau)^{32}}{\eta(2\tau)^{16}}\\
&+\frac{\eta(2\tau)^{80}}{\eta(\tau)^{32}\eta(4\tau)^{32}}\left(\a_8(1)\cdot\frac{\eta(4\tau)^{16}\eta(\tau)^8}{\eta(2\tau)^{24}}+\a_8(2)\cdot\frac{\eta(4\tau)^{32}\eta(\tau)^{16}}{\eta(2\tau)^{48}}+\a_8(3)\cdot\frac{\eta(4\tau)^{48}\eta(\tau)^{24}}{\eta(2\tau)^{72}}\right),
\end{align*}
and we calculate that $\a_8(1)=0$, $\a_8(2)=128,$ and $\a_8(3)=-2048$.  Then we have that
\begin{equation*}
\mathcal{G}_8(q)=\mathcal{Z}(8)\cdot\frac{\eta(4\tau)^{32}}{\eta(2\tau)^{16}}+\frac{\eta(2\tau)^{80}}{\eta(\tau)^{32}\eta(4\tau)^{32}}\left(128\cdot\frac{\eta(4\tau)^{32}\eta(\tau)^{16}}{\eta(2\tau)^{48}}-2048\cdot\frac{\eta(4\tau)^{48}\eta(\tau)^{24}}{\eta(2\tau)^{72}}\right).
\end{equation*}
Corollary \ref{cor} in this case gives
\begin{equation*}
\mathcal{G}_8\left(e^{-\pi}\right)=\left(\frac{\mathcal{Z}(8)}{2^{28}}+\frac{1}{2^{12}}\right)\cdot\left(\frac{\Gamma\left(\frac{1}{4}\right)^4}{\pi^3}\right)^4=0.2980189122\ldots
\end{equation*}
and
\begin{equation*}
\mathcal{G}_8\left(e^{-2\pi}\right)=\left(\frac{\mathcal{Z}(8)\left(\sqrt{2}-1\right)^8}{2^{36}}+\frac{1-\left(\sqrt{2}-1\right)^4}{2^{21}}\right)\cdot\left(\frac{\Gamma\left(\frac{1}{4}\right)^4}{\pi^3}\right)^4=0.0004465790\ldots.
\end{equation*}
\end{example}

\vspace{.4cm}

\end{document}